\numberwithin{equation}{section}
\theoremstyle{plain}
\newtheorem{thm}{Theorem}[section]
\newtheorem{definition}[thm]{Definition}
\newtheorem{theorem}{Theorem}
\newtheorem{corollary}[theorem]{Corollary}
\newtheorem{lemma}[theorem]{Lemma}
\begin{document}

\begin{frontmatter}

\title{Information weighted sampling for detecting rare items in finite populations with a focus on security}
\runtitle{Information Weighted Sampling}

\begin{aug}
\author{\fnms{Andr\'e J.} \snm{Hoogstrate}\corref{}\thanksref{u1,T1}\ead[label=e1]{a.hoogstrate@nfi.minvenj.nl}\ead[label=e3]{a.j.hoogstrate@cdh.leidenuniv.nl}}
\and
\author{\fnms{Chris A.J.} \snm{Klaassen}\thanksref{u2}\ead[label=e2]{c.a.j.klaassen@uva.nl}}

\thankstext{T1}{Part of the research was funded by the program VIA: Veiligheidsverbetering door Information
 Awareness (Improving Security by Information Awareness).}
\runauthor{A.J. Hoogstrate et al.}

\affiliation{Netherlands Forensic Institute and Leiden University\thanksmark{u1} \\ University of Amsterdam\thanksmark{u2}}

\address{Knowledge and Expertise Centre \\\qquad for Intelligent Data Analysis,\\
            Netherlands Forensic Institute,\\
            Laan van Ypenburg 6,\\
            2497 GB The Hague, The Netherlands,\\
            \printead{e1}}
\address{Centre for Terrorism and Counterterrorism,\\
         Campus The Hague, Leiden University,\\
         Kantoren Stichthage,\\
         Koningin Julianaplein 10,\\
         P.O.Box 13228,\\
         2501 EE The Hague, The Netherlands,\\
     \printead{e3}}
\address{Korteweg-de Vries Institute for Mathematics\\
 University of Amsterdam,\\
 Science Park 904, P.O. Box 94248,\\
 1090 GE Amsterdam, The Netherlands,\\
          \printead{e2}}

\end{aug}

\begin{abstract}
Frequently one has to search within a finite
population for a single particular individual or item with a rare
characteristic. Whether an item possesses the characteristic can
only be determined by close inspection. The availability of
additional information about the items in the population opens the
way to a more effective search strategy than just random sampling
or complete inspection of the population. We will assume that the
available information allows for the assignment to all items
within the population of a prior probability on whether or not it
possesses the rare characteristic. This is consistent with the
practice of using profiling to select high risk items for
inspection. The objective is to find the specific item with the
minimum number of inspections. We will determine the optimal
search strategies for several models according to the average
number of inspections needed to find the specific item. Using
these respective optimal strategies we show that we can order the
numbers of inspections needed for the different models partially
with respect to the usual stochastic ordering. This entails also a
partial ordering of the averages of the number of inspections.

Finally, the use, some discussion, extensions, and examples of
these results, and conclusions about them are presented.
\end{abstract}


\begin{keyword}[class=MSC]
\kwd[Primary ]{62D99}
\kwd{60E15}
\kwd[; secondary ]{94A20.}
\end{keyword}

\begin{keyword}
\kwd{probability sampling}
\kwd{search}
\kwd{rare events}
\kwd{profiling}
\end{keyword}

\end{frontmatter}


\section{Introduction}
This research is motivated by several problems relevant to
security applications. Examples thereof are the search for a
terrorist among a group of passengers, for a container carrying
illicit material on a vessel entering a port, for a murderer that
has left his DNA profile at a crime scene in a small community,
etc. In general, one has to search within a finite population for
a particular item with a rare characteristic. Only close
inspection will reveal if an item possesses the characteristic or
not. Based on profiling, a relatively quick assessment is obtained
on the probability that an individual item has the rare
characteristic. Subsequently, the possibly expensive or intrusive
inspection of the high probability individuals or items is
started. The underlying idea is that this is an economically
desirable, logistically possible, and hopefully socially
acceptable way of improving security in contrast to purely random
checks or inspection of all relevant individuals.

In this research we limit ourselves to the situation where it is
certain that exactly one individual or item with the rare
characteristic belongs to the population. This situation was
studied earlier by Press \cite{press2009}. He considered a subset
of the models that we have studied in Hoogstrate and Klaassen
\cite{HoogKlaa2011} and that we study in this article. Press'
results and our results in \cite{HoogKlaa2011} are limited to the
average number of inspections, while we extend these results here
to the stochastic ordering of the numbers of inspections
themselves. Meng \cite{Meng2012} and Press \cite{press2010}
extended the results of Press \cite{press2009} from a population
at one checkpoint to a population flowing through a network of
airports with multiple checkpoints. In the present study we use
\cite{press2009} as a starting point, but we apply an axiomatic
approach, thus specifying our assumptions clearly. After
introducing our models and assumptions we discuss in subsection
\ref{comparison} similarities to and differences with
\cite{Meng2012}, \cite{press2009}, and \cite{press2010}.

\subsection{Assumptions}\ For the population the following assumptions hold.
\begin{enumerate}\label{Population}
\item {\bf Finite Population} The population consists of a finite number $N$ of
items, numbered $i=1,2,....,N.$
\item {\bf Uniqueness} One and only one of the items in the population possesses the characteristic $\Gamma$.
\item {\bf Prior Probabilities} Each item $i$ can be assigned a known probability $p_i >0$ of possessing
the characteristic $\Gamma$ we are searching for, and these
probabilities add up to 1.
\end{enumerate}
The index of the $\Gamma$-item may be viewed as the result of one
draw from the set $\{1, 2, \dots, N\}$ with sampling probabilities
$(p_1, p_2, \dots, p_N).$ We know $(p_1, p_2, \dots, p_N),$ but
not the result of the draw, which follows a multinomial
distribution with parameters 1 and $(p_1, p_2, \dots, p_N).$

\noindent
For the procedures of inspection we vary the following
assumptions.
\begin{enumerate}
\setcounter{enumi}{3}
\item {\bf Enumeration} Whether or not it is possible to enumerate and order the items according to their
associated prior probability of possessing characteristic
$\Gamma$. This translates into the issue whether or not one can
deterministically control the order in which items will be
inspected.
\item {\bf Recognition} Whether or not recognition of characteristic
$\Gamma$ is perfect. We introduce the parameter $s_i$, $0<s_i\leq
1, \, i=1,...,N,$ as the probability of recognizing characteristic
$\Gamma$ when item $i$ is inspected and actually has the
characteristic.
\item {\bf Replacement} Whether or not it is possible to apply sampling without replacement.
\item {\bf Memory} Whether or not it is possible to use the information that
an item has been selected before, and to use the outcome of this
inspection.
\end{enumerate}

\noindent Assumptions 4--7 result in 16 different models as listed in Table 1. 
Procedures are allowed only if they stop searching once the
$\Gamma$-item has been found. Formally we put the following two
conditions on the search procedures.

\begin{enumerate}\label{Procedure2}
\setcounter{enumi}{7}
\item {\bf Stopping Rule} Once the $\Gamma$-item has been found or when
no items remain for inspection, no further inspections take place.
\item {\bf Finiteness} The search procedure terminates after a finite
number of inspections.
\end{enumerate}

\noindent Next we introduce the inspection probabilities. These
are the probabilities within the models I--P that govern the
process that selects items for inspection. We note that these
probabilities are called public profile probabilities by Press
\cite{press2009}.

\begin{enumerate}
\setcounter{enumi}{9}
\item {\bf Inspection Probabilities} If an inspection takes place, the
probability that item $i$ will be inspected, is $q_i.$ We require
$\sum_{i=1}^N q_i=1$ and $q_i > 0,\, i=1,\dots, N.$
\end{enumerate}

\noindent
To enable a more detailed analysis we define the
following probabilities.
\begin{enumerate}
\setcounter{enumi}{10}
\item {\bf Attention} The sampling probability that
item $i$ comes to the attention of the inspector, is denoted by
$\lambda_i\,.$ We require $\sum_{i=1}^N \lambda_i=1$ and
$\lambda_i
> 0,\, i=1,\dots, N.$
\item {\bf Conditional Inspection} Given item $i$ has come to the
attention of the inspector, it has probability $\pi_i >0$ of being
inspected.
\end{enumerate}
Note that $q_i,\, i=1,\dots, N,$ result from the two processes
described in Assumptions 10 and 11, and that the probabilities
concerned are related by
\begin{equation}\label{profilingprobabilities}
q_i = \frac{ \lambda_i \pi_i}{\sum_{j=1}^N \lambda_j \pi_j}, \quad
i=1,\dots, N.
\end{equation}

\begin{table}\label{AllModels}
\tabcolsep=0.11cm
\begin{tabular}{|l|c|c|c|c|c|c|c|c|c|c|c|c|c|c|c|c|l|}
\hline
model index &A&B&C&D&E&F&G&H&I&J&K&L&M&N&O&P \\
\hline
enumeration & y & y & y & y & y & y & y & y & n & n & n & n & n & n & n & n\\
perfect recognition & y & y & y & y & n & n & n & n & y & y & y & y & n & n & n & n\\
with replacement & y & y & n & n & y & y & n & n & y & y & n & n & y & y & n & n\\
memory  & y & n & y & n & y & n & y & n & y & n & y & n & y & n & y & n\\
\hline
\end{tabular}
\caption{An overview of the different models: the authoritarian
models A to H, the democratic models I to P.}
\end{table}

\subsection{Discussion of the Assumptions}\
In Assumption 3 the probabilities $p_i$ are assumed to be given
without error. Of course, in practice this will often not be the
case. We will not assess the effects of uncertainty in these
probabilities by estimation here, as it is our objective to find
optimal strategies first.

Assumption 5 does not allow for false positives. We could enhance
the models by introducing a parameter representing the probability
that an item is incorrectly classified as possessing the specific
characteristic $\Gamma$ while this is actually not the case. Such
an addition is left for further research.

Note that each item $i$ with $p_i$ positive could be the
$\Gamma$-item, and hence should not be excluded from inspection
under any procedure. Exclusion would be in conflict with
assumption 9. This implies that procedures using a positive
threshold to $p_i\,,\ i=1,\dots, N,$ are excluded from our study.

Assumption 10 introduces the probabilities that govern the process
for selecting the individuals to be inspected in case enumeration
is not possible.  When it is possible to enumerate the items, one
can decide in which order the items have to be inspected. In the
models without enumeration the order in which items are inspected,
is random and depends on two processes. First it depends on the
stochastic mechanism that determines in which order items come to
the point of inspection (Assumption 11), secondly it depends on
the probability with which the item is inspected, once the item
has come to the point of inspection (Assumption 12). If some
properties or characteristics of the individuals or items in the
population are known, the resulting profiles may be used in
determining the conditional inspection probabilities $\pi_i$ or
even the sampling probabilities $\lambda_i\,.$ Obtaining an
estimate for $\pi_i$ is commonly associated with the term
profiling. The items will be inspected in an orderly sequential
fashion but the order in which items are to be inspected, might be
determined beforehand. Finally, we point out explicitly that we
assume the probabilities $p_i, q_i, s_i, \lambda_i,$ and $\pi_i$
to be constant over time and to be the same in repeated trials and
for all inspections. In practice, this assumption will often only
hold by approximation.

\subsection{Comparison between Approaches}\label{comparison}\
The most important question Press raises in \cite{press2009} is
whether actuarial methods will, from a mathematical or
probabilistic point of view, deliver the security levels as
expected by government. Subsequently he studies a stylized model
of reality and obtains both expected and surprising results. The
analyzed models however are formulated mathematically sloppily,
what makes determining their practical relevance rather difficult.


In \cite{press2010} Press analyzes the same kind of model and
optimization criterion, the average number of inspections, called
secondary checks, necessary to catch the malfeasor, but for a
network of checkpoints and under the extra constraint of allowing
for only $M$ secondary checks. As Meng \cite{Meng2012} points out
his equations (4) and (5) are wrong in that they allow
probabilities larger than 1, and Meng derives the correct
formulas.

In this setting of a maximum of $M$ secondary checks, as several
researchers, notably Meng \cite{Meng2012}, think, the optimization
criterion of minimizing the average number of checks makes hardly
any sense anymore. There is always a positive probability that the
terrorist, or malfeasor, will go through undetected. So, they
propose to optimize the probabilities $q_i$ of being selected for
inspection such as to minimize the probability of a terrorist
going through undetected. Meng \cite{Meng2012} analyzes this new
optimization criterion under the constraint on the number of
inspections and obtains some surprising results.

In our research we consider all models presented in Table
\ref{AllModels} without a maximum of $M$ secondary checks and with
the mean of the number of secondary checks as the optimization
criterion. However, in Models G, H, O, and P the probability might
be positive that the $\Gamma$-item goes through undetected. For
these models our criterion will be the conditional mean of the
number of secondary checks, given the $\Gamma$-item will be
detected. Subsequently, we order all models with their
corresponding optimal procedures, thus allowing for a balanced
decision in choosing the model and procedure appropriate for the
situation at hand. In practice our analysis is relevant when one
has a well defined closed population where there is certainty
about the existence of one item or individual having the sought
after property and it is necessary to find that person or item.

\section{Analysis}\
For each of the models we will introduce and analyze inspection
procedures. As performance measure we use the average number of
inspections that these procedures need in order to find the
$\Gamma$-item. Where possible we will minimize these averages.
Subsequently, we will study the distribution function of the
random number of inspections needed when these procedures are
applied and partially order the procedures.

\subsection{Authoritarian Models}\
In this section we analyze procedures for the models A to H from Table 1,
where enumeration and ordering of the items is
possible. We will call these models Authoritarian as in most cases
an authoritarian regime will have to be put in place in order to
get the ordering implemented, especially when the items are
people. This is in line with Press \cite{press2009}. We first
analyze the models where besides enumeration and ordering, perfect recognition
is possible.


\subsubsection{Analysis of Models A, B, C, and D}\ First we
consider model A. As we can use enumeration, ordering, and perfect
recognition, we can proceed by using the assigned prior
probabilities $p_i$ and inspect without replacement. If at the
$j$-th inspection an item with prior probability $p_{(j)}$ is
checked, then the average number of inspections for this procedure
is
\begin{equation}\label{muABCD}
\mu_{ABCD}=\sum_{j=1}^N jp_{(j)}\,.
\end{equation}
If there is an $i$ with $p_{(i)} < p_{(i+1)},$ then $\mu_{ABCD}$
can be made smaller by interchanging $p_{(i)}$ and $p_{(i+1)}.$
Consequently, as our objective is to minimize the average number
of inspections, we follow Press \cite{press2009} and choose
$p_{(1)} \geq p_{(2)} \geq \dots \geq p_{(N)},$ the ordered
probabilities $p_i.$ For the uninformative prior probabilities
$p_i=1/N,\, i=1,\dots, N,$ this yields the classical value
\begin{equation}\label{muABCD2}
\mu_{ABCD} = \frac {N+1}2.
\end{equation}
Note that under the optimal strategy with $p_{(1)} \geq p_{(2)}
\geq \dots \geq p_{(N)}$ the average $\mu_{ABCD}$ from
(\ref{muABCD}) equals at most $(N+1)/2$ from (\ref{muABCD2}). This
is an instance of Chebyshev's algebraic inequality, which may be
proved for $N$ odd by noting that
\begin{equation}
\frac {N+1}{2} - \sum_{j=1}^N jp_{(j)} =
\sum_{j=1}^N\left[\frac{N+1}2 - j\right]\ \left[p_{(j)}
-p_{\left(\frac{N+1}2 \right)} \right] \geq 0
\end{equation}
holds since each term in the second sum is nonnegative.

For the models A, B, C, and D we note that under the Stopping Rule
8 with or without memory and with or without replacement have no
effect. Consequently, the best strategies for these models are the
same, and therefore we have indicated the resulting average with
$\mu_{ABCD}.$



\subsubsection{Analysis of Models E and F}\label{analysisEF}
In Press \cite{press2009} an analysis for model E, with
enumeration and stochastic recognition, was carried out under the
reference authoritarian screening strategies with stochastic
recognition. To clarify the argument for the optimal strategy as
put forward in Press \cite{press2009}, we use conditional
probabilities. By Press' notation $s_i$ we denote the conditional
probability of identifying item $i$ at inspection as having
characteristic $\Gamma,$ given it is the $\Gamma$-item. Given that
item $j$ has been inspected $m_j$ times without having been
identified as having characteristic $\Gamma$ for $j=1, \dots, N,$
the conditional probability that item $i$ has characteristic
$\Gamma$ and will be identified at inspection, equals
\begin{equation}\label{argumentPress}
\frac{p_i (1-s_i)^{m_i}s_i}{\sum_{j=1}^N p_j
(1-s_j)^{m_j}}\,,\quad i=1,\dots, N.
\end{equation}
Consequently, in order to have the highest probability of
identifying the item with characteristic $\Gamma$ at the next
inspection, given the inspection history, one has to inspect item
$i,$ if it satisfies
\begin{equation}\label{argmax}
p_i (1-s_i)^{m_i}s_i = \max_{j=1,\dots,N} p_j (1-s_j)^{m_j}s_j \,.
\end{equation}
Note that for $s_i=1,\, i=1, \dots, N,$ (\ref{argumentPress}) and
(\ref{argmax}) present an alternative way to describe the optimal
procedure for Models A, B, C, and D. To compute the average number
of inspections under this optimal strategy we observe that the
order in which the items are to be inspected is completely
governed by (\ref{argumentPress}) and (\ref{argmax}) in a
deterministic manner as the parameters $s_i$ and $p_i$ are assumed
known. Denote the order of the items to be inspected by the
sequence of numbers $t_{ij},$ where at the $t_{ij}$-th inspection
item $i$ is inspected for the $j$-th time. The probability that
item $i$ is recognized as the $\Gamma$-item at inspection
$t_{ij},$ is
\begin{equation}\label{probofrec_ij}
p_i (1-s_i)^{j-1}s_i\,.
\end{equation}
Therefore the expected number of inspections under the optimal
strategy for model E equals
\begin{equation}\label{expectationE}
\mu_{EF}=\sum_{i=1}^N\sum_{j=1}^\infty t_{ij} p_i
(1-s_i)^{j-1}s_i\,.
\end{equation}
Note that in case of perfect recognition (\ref{expectationE})
reduces to
\begin{equation}\label{expectationEreduced}
\mu_{ABCD}=\sum_{i=1}^N\sum_{j=1}^\infty t_{ij} p_i 0^{j-1} =
\sum_{i=1}^N t_{i1}p_i = \sum_{i=1}^N t_{i1}p_{(t_{i1})} =
\sum_{j=1}^N jp_{(j)} .
\end{equation}
 To verify that (\ref{expectationE}) is optimal indeed,
we compare time $t_{ij}$ with $t_{ij}+1=t_{k\ell}$. If $k=i$ then
$\ell=j+1$ and we do nothing. However, if $k\neq i$ then reversing
the order of these two inspections gives a smaller value for
$\mu_{EF}$ if and only if
\begin{equation}\label{condition} p_i (1-s_i)^{j-1}s_i -
p_k (1-s_k)^{\ell-1}s_k <0.
\end{equation}
This implies that at each point in time (\ref{argmax}) should
hold.

Note that [12] of Press \cite{press2009} is equivalent to
(\ref{expectationE}) and that [13] of Press \cite{press2009}
follows by $P(N_{EF}=t_{ij})=p_i(1-s_i)^{j-1}s_i$ and
\begin{equation}\label{noref}
\sum_{i=1}^N \sum_{j=1}^\infty P(N_{EF}=t_{ij}) =
 \sum_{i=1}^N \sum_{j=1}^\infty p_i (1-s_i)^{j-1} s_i =1.
\end{equation}
Here $N_{EF}$ is defined as the number of inspections needed to
find the $\Gamma$-item under the optimal strategy.

For the analysis of procedure $F$ we just observe that as the
order in which the items are inspected can be determined in
advance just as in model E, the optimal strategy and subsequent
analysis are the same for model E and F, whence the notation
$\mu_{EF}$ and $N_{EF}$.

\subsubsection{Analysis of Models G and H}\

Models G and H satisfy the same conditions as Models C and D,
except for the perfect recognition condition. In fact, Models G
and H are a generalization of Models C and D, respectively, in the
sense that for $s_i=1,\, i=1, \dots, N,$ these models are the
same. In these four models there is sampling without replacement,
and hence in Models G and H there is a possibility that the
$\Gamma$-item will not be found. Indeed, the probability the
$\Gamma$-item will be found equals $\sum_{i=1}^N s_i p_i$ here,
and if this probability is less than 1, the distribution of the
number of inspections needed to identify the $\Gamma$-item is
defective. In this case it makes sense to take the number of
inspections as infinity if the $\Gamma$-item has not been
identified, and consequently we then have
\begin{equation}
\mu_{GH}=\infty.
\end{equation}
One might be interested in the conditional expectation of the
number of inspections given the $\Gamma$-item will be found. This
conditional expectation equals $\mu_{ABCD}$ given in
(\ref{muABCD}).

Since the probability that the $\Gamma$-item will not be found,
equals $\sum_{i=1}^N (1-s_i) p_i$ and does not depend on the
choice of the $q_i$s, we define the optimal strategy as the same
one that minimizes $\mu_{ABCD}$ given in (\ref{muABCD}). Hence it
makes sense to write
\begin{equation}\label{muGH}
\mu_{GH}=\sum_{i=1}^N s_i p_i \sum_{j=1}^N jp_{(j)} + \left(1-
\sum_{i=1}^N s_i p_i \right) \infty
\end{equation}
with $0 \times \infty$ interpreted as 0.



\subsection{Democratic Models}\
In this section we analyze the six models I to N. We note that
models J and N have been analyzed by Press \cite{press2009} as
democratic strategies under perfect recognition and stochastic
recognition, respectively.

\subsubsection{Analysis of Models I, K, and L}\
If within the models I, K, or L an inspected item is not the
$\Gamma$-item, it is not selected again for inspection either
because the model is without replacement (K and L) or because the
item is being recognized as having had a negative outcome of the
inspection before (I). Consequently, these models give rise to the
same optimal procedure.


By $(I_1, I_2, \dots, I_N)$ we denote the random vector of indices
that describes in which order the items in the population will be
inspected. This random vector is ruled by the $q_i$ from
Assumption 10, to wit
\begin{equation}\label{IKL1}
P\left(I_1 = i_1, \dots, I_N = i_N \right) =
\prod_{j=1}^N\frac{q_{i_j}}{1-\sum_{h=1}^{j-1}q_{i_h}}.
\end{equation}
The conditional expectation of the number of inspections needed,
given $(I_1, I_2, \dots, I_N)=(i_1,i_2, \dots, i_N),$ equals
\begin{equation}\label{conditionalAverage}
\sum_{k=1}^N k p_{i_k}\,.
\end{equation}
Consequently, the unconditional average number of inspections
equals
\begin{equation}\label{muiklProb}
\mu_{IKL}=\sum_{(i_1,...,i_N)} \sum_{k=1}^N k p_{i_k}
\prod_{j=1}^N\frac{q_{i_j}}{1-\sum_{h=1}^{j-1}q_{i_h}},
\end{equation}
where the first summation is over the collection of $N!$ vectors
$(i_1,...,i_N)$ that can be obtained by permutation of $(1, \dots,
N)$. Calculation of the optimal $q_1,...,q_N$ for this model is an
extremely daunting task. However, we note that the special case of
the uniform distribution with $q_i=1/N,\, i=1,\dots, N,$ yields
$\mu_{IKL}= (N+1)/2,$ which is no surprise.



\subsubsection{Analysis of Model J}\
In Model J we have sampling with replacement, actually. Let $C$ be
the index of the item that has characteristic $\Gamma$ and let $T$
be the number of inspections needed to identify the item with
characteristic $\Gamma.$ Note that $C$ is random with distribution
$P(C=i)=p_i\,,\,i=1,\dots, N,$ according to Assumption 2. Given
$C=i,$ the random variable $T$ has a geometric distribution with
success probability $q_i\,,$ i.e.
\begin{equation}\label{geometric}
P(T=j\,|\,C=i)= (1-q_i)^{j-1} q_i\,,
\end{equation}
and mean
\begin{equation}\label{conditionalmean}
\sum_{j=1}^\infty j (1-q_i)^{j-1} q_i = \frac 1{q_i}.
\end{equation}
Consequently, the average number $\mu_J$ of inspections needed is
the expectation of (\ref{conditionalmean}) and equals (cf. [3] of
Press \cite{press2009})
\begin{equation}\label{DM}
\mu_J=\sum_{i=1}^N \frac{p_i}{q_i}.
\end{equation}
By the Cauchy-Schwarz inequality we have
\begin{equation}\label{CS}
\left(\sum_{i=1}^N \sqrt{p_i}\right)^2 = \left(\sum_{i=1}^N \frac
{\sqrt{p_i}} {\sqrt{q_i}} \sqrt{q_i} \right)^2 \leq \sum_{i=1}^N
\frac{p_i}{q_i} \sum_{j=1}^N q_j = \mu_J
\end{equation}
with equality if and only if
\begin{equation}\label{equality}
q_i = \frac {\sqrt{p_i}}{\sum_{j=1}^N \sqrt{p_j}},\quad i=1,\dots,
N,
\end{equation}
holds. Note that (\ref{equality}) yields the optimal strategy.



\subsubsection{Analysis of Models M and N}\
In model N the same conditions hold as in model J. However, there
is no perfect recognition. Instead we assume that the probability
of recognizing item $i$ as the $\Gamma$-item when it is inspected,
is given by the probability $s_{i}.$ Based on the same reasoning
as in (\ref{geometric}) and (\ref{conditionalmean}) we get the
average number of required inspections, given $C$, as $1/(q_C
s_C).$ Taking the expectation over this $C$, we obtain (cf.
(\ref{DM}))
\begin{equation}
\mu_{MN}=\sum_{i=1}^N \frac{p_{i}}{q_i s_i}.
\end{equation}
Minimization as in (\ref{CS}) and (\ref{equality}) shows that the
optimal strategy and minimized value of $\mu_{MN}$ are given by
\begin{equation}
q_i = \frac{\sqrt{p_i / s_i}}{\sum_{j=1}^N \sqrt{p_j / s_j}},
\quad \mu_{MN} = \left(  \sum_{i=1}^N \sqrt{\frac{p_i}{s_i}}
\right)^2.
\end{equation}

For model M we obtain the same results as for model N. At first
sight this is a bit strange, but it is due to the fact that the
optimal strategy is obtained using known $p_i$ and $s_i$. That
means that remembering whether somebody has been screened already
and found not to be the item with characteristic $\Gamma,$ does
not give additional information and consequently the optimal
strategy for model N cannot be improved within model M. However,
in practice the values of $p_i$ and $s_i$ would have to be
estimated and obtaining a negative observation would mean an
adjustment in the estimates for $p_i$ and $s_i$.


\subsubsection{Analysis of Models O and P}\

The relationship between Models O and P on the one hand and Models
K and L on the other hand is the same as between Models G and H
and Models C and D, respectively. They differ in the perfect
recognition condition. In these models there is sampling without
replacement, and the probability the $\Gamma$-item will be found
equals $\sum_{i=1}^N s_i p_i\,.$ If this probability is less than
1, it makes sense to take the number of inspections as infinity if
the $\Gamma$-item has not been identified.

Like for Models G and H, we define the optimal strategy as the
same one that minimizes $\mu_{IKL}$ given in (\ref{muiklProb}),
and we write
\begin{equation}\label{muOP}
\mu_{OP}=\sum_{i=1}^N s_i p_i \sum_{(i_1,...,i_N)} \sum_{k=1}^N k
p_{i_k} \prod_{j=1}^N\frac{q_{i_j}}{1-\sum_{h=1}^{j-1}q_{i_h}} +
\left(1- \sum_{i=1}^N s_i p_i \right) \infty.
\end{equation}



\section{Ordering of Models}
In the above analysis we introduced several procedures minimizing
the average number of inspections necessary to find the
$\Gamma$-item under varying assumptions on the investigative
environment. This left us with the averages $$\mu_{ABCD}\,,\
\mu_{EF}\,,\ \mu_{GH}\,,\ \mu_{IKL}\,,\ \mu_J\,,\ \mu_{MN}\,,\
\mu_{OP}\,,$$ which we try to put in increasing order in this
section. If one is smaller than the other an investigator could
try to change the conditions under which one has to conduct the
investigation such that the assumptions of the procedure with the
smaller average number of inspections can be met.

Denote the number of inspections needed by the optimal strategy
for each of the models discussed above by
$$N_{ABCD}\,,\ N_{EF}\,,\ N_{GH}\,,\ N_{IKL}\,,\ N_J\,,\
N_{MN}\,,\ N_{OP}\,,$$ where the subscripts denote the model.
These numbers are random variables, which can be ordered
partially. We need the following definition.

\begin{definition}\label{stoch}
Random variable $X$ is stochastically smaller than random variable
$Y$, if and only if $$P(X\leq z)\geq P(Y\leq z)$$ holds for all
$z\in \mathbb{R};$ notation $X\preceq_{st}Y.$
\end{definition}

It is clear that not any pair of random variables can be ordered
stochastically, but some of the above numbers of inspections can,
as stated in our theorem.

\begin{theorem}\label{OverallOrderNumber}
Under Assumptions 1--10 the optimal strategies for models A--P as
defined and analyzed above, give rise to the following partial
ordering of the corresponding random numbers of inspections
\begin{align}
    N_{ABCD}&\preceq_{st} N_{EF}\preceq_{st}
    N_{MN}\,, \label{OONvlg1} \\
    N_{ABCD}&\preceq_{st} N_{GH}\preceq_{st} N_{OP}\,, \label{OONvlg2} \\
    N_{ABCD}&\preceq_{st} N_{IKL}\preceq_{st} N_J\preceq_{st}
    N_{MN}\,, \label{OONvlg3} \\
    N_{IKL}&\preceq_{st} N_{OP}\,. \label{OONvlg4}
\end{align}
Furthermore, the first inequality of (\ref{OONvlg1}) and of
(\ref{OONvlg2}), the last inequality of (\ref{OONvlg3}), and
(\ref{OONvlg4}) are equalities if and only if $\sum_{i=1}^N s_i
p_i=1$ holds. The second inequality of (\ref{OONvlg2}) and the
first inequality of (\ref{OONvlg3}) are equalities if and only if
$p_1=\dots = p_N=1/N$ holds. The second inequality of
(\ref{OONvlg1}) and of (\ref{OONvlg3}) are equalities if and only
if $N=1$ holds.

Finally, $N_{EF}$ is not comparable to $N_{GH},$ nor to $N_{IKL},
N_J,$ and $N_{OP}$ in this stochastic ordering, $N_{GH}$ is not
comparable to $N_{IKL}, N_J,$ and $N_{NM},$ and $N_{OP}$ is not
comparable to $N_J$ and $N_{MN}.$
\end{theorem}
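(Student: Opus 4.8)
The plan is to reduce every assertion $X\preceq_{st}Y$ to a pointwise comparison of survival functions, using Definition \ref{stoch} in the equivalent form $P(X>z)\le P(Y>z)$ for all $z$. From the analysis I read off, for each model, the tail $P(N>t)$: the variables $N_{ABCD},N_{EF},N_{IKL},N_J,N_{MN}$ are proper, whereas $N_{GH}$ and $N_{OP}$ are defective with an atom $\sum_i p_i(1-s_i)$ at $+\infty$. Since $\preceq_{st}$ is transitive, it suffices to treat each displayed inequality between neighbouring variables; the chains in (\ref{OONvlg1})--(\ref{OONvlg4}) then follow, and the equality conditions are extracted from the conditions for equality at every $t$ in each pointwise bound. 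I would group the inequalities by the single feature that changes between the two models.

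\textbf{Recognition.} For the pairs differing only by degrading perfect recognition to stochastic recognition under the same sampling mechanism I would use a pathwise coupling: realize the inspection order and the index $C$ of the $\Gamma$-item once, and let each inspection of $C$ succeed independently with probability $s_C$; then $C$ is caught no earlier than under perfect recognition, so $N$ cannot decrease. This gives $N_{ABCD}\preceq_{st}N_{GH}$ and $N_{IKL}\preceq_{st}N_{OP}$ at once, and $N_J\preceq_{st}N_{MN}$ follows termwise from $P(N_{MN}>t)=\sum_i p_i(1-q_is_i)^t\ge\sum_i p_i(1-q_i)^t=P(N_J>t)$. The order used by E and F is not the decreasing-$p$ order, so for $N_{ABCD}\preceq_{st}N_{EF}$ I would instead bound the tail directly: with $m_i(t)$ the number of inspections item $i$ receives in the first $t$ steps of (\ref{argmax}), one has $P(N_{EF}\le t)=\sum_i p_i\{1-(1-s_i)^{m_i(t)}\}\le\sum_i p_i\mathbf 1[m_i(t)\ge1]\le\sum_{k\le t}p_{(k)}=P(N_{ABCD}\le t)$, since at most $t$ distinct items are touched. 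In every such case, equality for all $t$ forces recognition never to fail, i.e.\ $\sum_i s_ip_i=1$.

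\textbf{Sampling structure.} Two devices handle a change of sampling scheme. First, to compare a deterministic enumeration order with a democratic random order I would write $P(N\le t)=\sum_i w_i\,P(\text{item }i\text{ inspected by time }t)$ with weights $w_i=p_i$ (perfect recognition) or $w_i=p_is_i$; because exactly $t$ items occupy the first $t$ positions under (\ref{IKL1}), these probabilities sum to $t$, so placing all mass on the $t$ heaviest items — exactly what the deterministic decreasing-weight order does — maximizes the sum, yielding $N_{ABCD}\preceq_{st}N_{IKL}$ and $N_{GH}\preceq_{st}N_{OP}$, with equality for all $t$ precisely when the weights are flat. Second, to pass from sampling without replacement to sampling with replacement I would couple the two by generating the i.i.d.\ $q$-draws of the with-replacement model and reading the without-replacement order (\ref{IKL1}) off as the sequence of first appearances; the number of distinct draws up to the first appearance of $C$ never exceeds the total number of draws, so $N_{IKL}\le N_J$ pathwise, with equality only when $N=1$.

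\textbf{The last inequality, the incomparabilities, and the hard part.} For $N_{EF}\preceq_{st}N_{MN}$ I would exploit the greedy optimality behind (\ref{argmax}): $P(N_{EF}>t)$ equals the minimum of $\sum_i p_i(1-s_i)^{m_i}$ over nonnegative integer allocations with $\sum_i m_i=t$, whereas $P(N_{MN}>t)$ is the expectation of that same separable convex functional under the random Binomial allocation of $t$ inspections among the items; a minimum is at most an average, and the two agree only when $N=1$. The stated incomparabilities I would prove by exhibiting crossings of survival functions: any pair involving the defective $N_{GH}$ or $N_{OP}$ against a proper variable must cross, because the defective tail stays at $\sum_i p_i(1-s_i)>0$ while the proper tail vanishes, and a small-$t$ instance reverses the inequality; the proper-versus-proper cases such as $N_{EF}$ against $N_J$ are settled by evaluating the tails at $t=1$ and at large $t$ on a two- or three-point population. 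I expect the genuine obstacle to be the cross-scheme comparisons in which the weighting $p_i$ versus $p_is_i$ and the per-model inspection probabilities interact: making the rearrangement argument for $N_{GH}\preceq_{st}N_{OP}$ consistent with the inspection rule actually declared optimal for those models, and reading off the sharp equality conditions, is delicate, whereas assembling an explicit crossing for each listed incomparable pair is routine but laborious.
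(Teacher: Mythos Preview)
Your proposal is broadly correct and shares the paper's overall architecture---coupling for the recognition degradations, tail comparison for the rest, and crossing arguments for the incomparabilities---but two steps are carried out by genuinely different devices.

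For $N_{ABCD}\preceq_{st}N_{IKL}$ you note that the inclusion probabilities $a_i(t)=P(i\in\{I_1,\dots,I_t\})$ lie in $[0,1]$ and sum to $t$, so the linear form $\sum_i p_ia_i$ is bounded by its value when the mass sits on the $t$ largest $p_i$'s, namely $P(N_{ABCD}\le t)$. The paper does not use this LP/rearrangement shortcut: it proves a dedicated lemma (Lemma~\ref{ABCDsmallerIKL}) establishing the partial-sum bound $\sum_{k\le\ell}P(N_{IKL}>m\mid C=k)\ge\ell-m$ by a direct computation with the sequential-sampling weights~(\ref{IKL1}), then feeds this into an Abel summation against the ordered gaps $p_{(\ell)}-p_{(\ell+1)}$. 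Your argument reaches the same conclusion and the same equality condition in one line; the paper's route is heavier but yields the intermediate inequality~(\ref{sums3}) as a by-product. For $N_{EF}\preceq_{st}N_{MN}$ your ``minimum $\le$ mean'' over inspection-count allocations of $\sum_i p_i(1-s_i)^{m_i}$ makes the comparison fully explicit; the paper disposes of it with a one-sentence coupling remark (the deterministic schedule~(\ref{argmax}) is unavailable without enumeration).

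Where you rightly flag a difficulty---extending the rearrangement to $N_{GH}\preceq_{st}N_{OP}$ when the GH schedule orders by $p_i$ but your weights become $p_is_i$---the paper sidesteps the mismatch: it treats $N_{GH}$ and $N_{OP}$ as the perfect-recognition variables $N_{ABCD}$ and $N_{IKL}$ run through a common recognition coin (``defective versions'') and simply invokes Lemma~\ref{ABCDsmallerIKL} again, rather than reworking a rearrangement with new weights. For the incomparabilities your outline is correct in spirit, but the paper supplies one explicit construction you would still need: Lemma~\ref{EFincomparable} takes $p_i\propto i$, $s_i=1/i$ and compares $P(N=1)$ via Chebyshev's algebraic inequality to separate $N_{EF}$ from $N_{OP}$ (and from $N_{IKL},N_J$ by a separate mean computation), so the ``routine but laborious'' step has a specific witness.
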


This result has its consequences for the average numbers of inspections.

\begin{corollary}\label{OverallOrderAverage}
Under the Assumptions 1--10 the models A--P as analyzed above give
rise to the following ordering of the average numbers of
inspections corresponding to the optimal strategies
\begin{align*}
    \mu_{ABCD}&\leq\mu_{EF}\leq\mu_{MN}\,, \\
    \mu_{ABCD}&\leq\mu_{GH} \leq \mu_{OP}\,, \\
    \mu_{ABCD}&\leq\mu_{IKL} \leq\mu_J\leq\mu_{MN}\,, \\
    \mu_{IKL}&\leq\mu_{OP}\,.
\end{align*}
\end{corollary}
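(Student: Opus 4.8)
The plan is to deduce the corollary directly from Theorem \ref{OverallOrderNumber} by invoking the elementary fact that stochastic ordering of random variables forces the same ordering of their expectations. Each average $\mu_X$ was introduced in Section 2 precisely as the expectation $E[N_X]$ of the corresponding random number of inspections $N_X$ (see for instance (\ref{expectationE}), (\ref{DM}), and (\ref{muiklProb})). Hence it suffices to establish the single implication $N_X \preceq_{st} N_Y \Rightarrow E[N_X] \le E[N_Y]$ and then read off each of the seven inequalities from the four chains (\ref{OONvlg1})--(\ref{OONvlg4}).

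First I would record the key lemma. All the variables $N_X$ are nonnegative and integer-valued, taking values in $\{1,2,\dots\}\cup\{\infty\}$, the value $\infty$ occurring for $N_{GH}$ and $N_{OP}$ when $\sum_{i=1}^N s_i p_i<1$. For such a variable the tail-sum representation
\begin{equation*}
E[N_X]=\sum_{k=1}^\infty P(N_X\ge k)
\end{equation*}
holds in $[0,\infty]$. If $N_X\preceq_{st} N_Y$, then by Definition \ref{stoch} we have $P(N_X\le z)\ge P(N_Y\le z)$ for every $z\in\mathbb{R}$, hence $P(N_X\ge k)\le P(N_Y\ge k)$ for every integer $k$. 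Summing over $k$ gives $E[N_X]\le E[N_Y]$, that is $\mu_X\le\mu_Y$. The comparison is termwise and therefore valid in the extended reals, so it automatically covers the cases in which one or both expectations equal $+\infty$.

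Second, I would apply this lemma to each consecutive pair appearing in the theorem. The chain $N_{ABCD}\preceq_{st} N_{EF}\preceq_{st} N_{MN}$ yields $\mu_{ABCD}\le\mu_{EF}\le\mu_{MN}$; the chain $N_{ABCD}\preceq_{st} N_{GH}\preceq_{st} N_{OP}$ yields $\mu_{ABCD}\le\mu_{GH}\le\mu_{OP}$; the chain $N_{ABCD}\preceq_{st} N_{IKL}\preceq_{st} N_J\preceq_{st} N_{MN}$ yields $\mu_{ABCD}\le\mu_{IKL}\le\mu_J\le\mu_{MN}$; and finally $N_{IKL}\preceq_{st} N_{OP}$ yields $\mu_{IKL}\le\mu_{OP}$. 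These are exactly the four displayed lines of the corollary, so the proof would be complete.

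There is essentially no hard part here; the corollary is a routine consequence of the theorem. The only point demanding a little care is the presence of the defective distributions for $N_{GH}$ and $N_{OP}$, for which $\mu_{GH}$ and $\mu_{OP}$ may equal $+\infty$. I would emphasize that the tail-sum above (equivalently $E[N_X]=\int_0^\infty P(N_X>z)\,dz$) is the correct representation to use, because it is monotone in the survival function and well defined in $[0,\infty]$; consequently the implication $N_X\preceq_{st} N_Y\Rightarrow\mu_X\le\mu_Y$ requires no integrability hypothesis, and the inequalities involving $+\infty$ hold trivially.
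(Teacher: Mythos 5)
Your proposal is correct and follows essentially the same route as the paper: the paper's proof also rests on the tail representation $\mathbb{E}X=\int_0^\infty P(X>x)\,dx$ for nonnegative random variables, applied to the stochastic orderings of Theorem \ref{OverallOrderNumber}. Your additional remarks on the defective (possibly infinite) cases $N_{GH}$ and $N_{OP}$ merely make explicit what the paper leaves implicit.
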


\begin{proof}[Proof of Corollary \ref{OverallOrderAverage}]
Just note that for nonnegative random variables $X$ the
expectation ${\mathbb E}X$ equals $${\mathbb E}X = \int_0^\infty
P(X>x) dx.$$
\end{proof}

\begin{figure}
\caption{\label{t1graphically} A graphical representation of Theorem \ref{OverallOrderNumber}.}
\begin{tikzpicture}[->,>=stealth',shorten >=1pt,auto,node distance=3cm,
  thick,main node/.style={circle,fill=blue!20,draw,font=\sffamily\bfseries},minimum size=12mm]

  \node[main node] (abcd) {ABCD};
  \node[main node] (ikl) [right of = abcd] {IKL};
  \node[main node] (ef) [above of = ikl] {EF};
  \node[main node] (mn) [right of = ef] {MN};
  \node[main node] (j) [right of = ikl] {J};
  \node[main node] (gh) [below of = ikl] {GH};
  \node[main node] (op) [right of = gh] {OP};

  \path[every node/.style={font=\sffamily\small},ultra thick]
    (abcd) edge node [left] {} (ef)
        edge node [left] {} (ikl)
        edge node [left] {} (gh)
    (ef) edge node [left] {} (mn)
    (ikl) edge node [left] {} (j)
        edge node [left] {} (op)
    (j) edge node [left] {} (mn)
    (gh) edge node [left] {} (op);
\end{tikzpicture}
\end{figure}
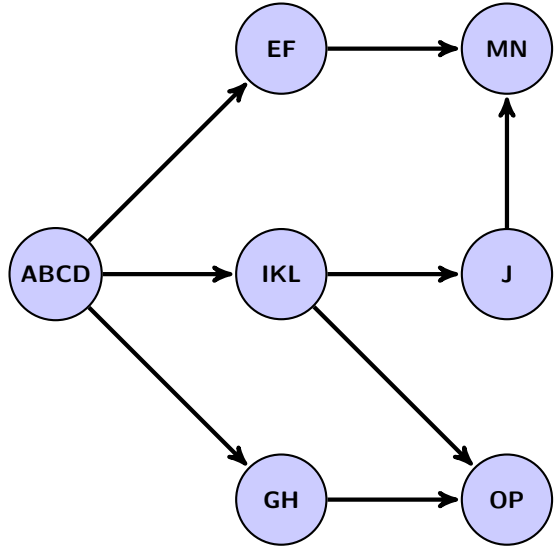

The results of Theorem \ref{OverallOrderNumber} can also be displayed graphically.
The result is depicted in Figure \ref{t1graphically}. The direction of the arrows between two nodes indicates the ordering between the two associated models. Models that are not comparable are not connected.

\begin{proof}[Proof of Theorem \ref{OverallOrderNumber}]
Let, as in Subsection \ref{analysisEF}, item $i$ be inspected for
the $j$-th time at the $ t_{ij}$-th inspection under model EF. To
prove the first inequality of (\ref{OONvlg1}) we just note that
for any positive integer $m$
\begin{eqnarray}\label{NEFNABCD}
\lefteqn{P\left(N_{EF}\leq m \right) = \sum_{i=1}^N p_i
\sum_{j=1}^\infty {\bf 1}_{[ t_{ij} \leq m ]} (1-s_i)^{j-1} s_i \nonumber} \\
& & = \sum_{i=1}^N p_i {\bf 1}_{[ t_{i1} \leq m ]}
\sum_{j=1}^\infty {\bf 1}_{[ t_{ij} \leq m ]} (1-s_i)^{j-1} s_i \leq  \sum_{i=1}^N p_i {\bf 1}_{[ t_{i1} \leq m ]} \\
& & \leq \sum_{j=1}^{\min\{m,N\}} p_{(j)} = P(N_{ABCD}\leq m)
\nonumber
\end{eqnarray}
holds with $p_{(1)} \geq p_{(2)} \geq \dots \geq p_{(N)}.$ Note
that equalities hold here if and only if $s_i$ equals 1 whenever
$p_i$ is positive, i.e. if and only if $\sum_{i=1}^N s_i p_i=1$
holds.

Note that under model MN the optimal strategy for model EF as
described in (\ref{argumentPress}) and (\ref{argmax}) cannot be
applied since there is no enumeration. This coupling argument
shows the second inequality of (\ref{OONvlg1}), which reduces to
an equality if and only if models EF and MN coincide, i.e. for
$N=1.$

One might call $N_{GH}$ a defective version of $N_{ABCD}$ in that
inspection proceeds in exactly the same way, unless because of
imperfect recognition the $\Gamma$-item has not been recognized
and hence will never be, in which case $N_{GH}=\infty$ holds. This
proves the first inequality of (\ref{OONvlg2}) with equality if
and only if $\sum_{i=1}^N s_i p_i=1$ holds. Similarly, inequality
(\ref{OONvlg4}) and its equality condition are proved.

The first inequality from (\ref{OONvlg3}) and its condition for
equality are proved in Lemma \ref{ABCDsmallerIKL} of the Appendix.
Since $N_{GH}$ and $N_{OP}$ are defective versions of $N_{ABCD}$
and $N_{IKL},$ respectively, this Lemma also proves the second
inequality of (\ref{OONvlg2}) and its equality condition.

Note that under model IKL items that have been checked before, are
not checked again, either because the perfect memory is used or
because items that have been checked, are not replaced. Under J
these items can still be sampled. This coupling argument proves
the second inequality from (\ref{OONvlg3}). Perfect, restricted,
or no memory and with or without replacement do not make a
difference here if and only if $N=1.$

Finally, the third ordering relation of (\ref{OONvlg3}) and its
equality condition are proved by Lemma \ref{JsmallerMN} of the
Appendix.

12 out of the $\binom{7}{2}=21$ possible pairs of numbers of
inspections have been ordered stochastically in
(\ref{OONvlg1})--(\ref{OONvlg4}). The other 9 pairs cannot be
stochastically ordered, as we will show now. Let $B$ be a
Bernoulli random variable with $P(B=1)=1-P(B=0)= \sum_{i=1}^N s_i
p_i$ that is independent of all random numbers of inspections.
Note that $N_{GH}$ has the same distribution as the defective
random variable $B N_{ABCD} +(1-B)\infty.$ Since inequality
(\ref{OONvlg1}) and the inequalities of (\ref{OONvlg3}) can be
strict, this representation of $N_{GH}$ shows that it is not
comparable to $N_{EF}, N_{IKL}, N_J,$ and $N_{MN}$ stochastically.
By an analogous argument $N_{OP}= B N_{IKL} +(1-B)\infty$ is not
comparable to $N_J$ and $N_{MN}.$

As Lemma \ref{EFincomparable} of the Appendix shows, $N_{EF}$ is
not comparable to $N_{OP}$ nor to $N_{IKL}$ and $N_J.$
\end{proof}


\section{Further Explorations}

\subsection{Profiling}\
The interesting question for the models studied here is how an
improvement of the differentiating power of the prior
probabilities $p_i$ affects the efficiency. Good discriminatory
prior probabilities lead to a limited group of relatively high
probability items and a large group of small probability items.
Obviously the optimal situation across all models is a degenerate
prior probability distribution with probability 1 for the
$\Gamma$-item and 0 for the other items. The closer one gets to
this distribution the better. In practice the prior probabilities
consist of probabilities based on available information.
Estimating these probabilities using the available information is
often referred to as profiling. The results here open up the
possibility to try to balance the costs of collecting additional
data in order to improve the discriminatory power of profiling and
the costs of additional inspections needed to find the
$\Gamma$-item.

\subsection{Updating Prior Probabilities}\
Recall that the random variable $C$ denotes the index of the
$\Gamma$-item. Furthermore, assume information $I$ has come to the
attention of the investigators before the procedure has started.
We may update the prior probability $p_i=P(C=i),$ using Bayes
rule, by
\begin{equation}\label{BayesRule}
P(C=i\,|\,I)=\frac{P(I\,|\, C=i)p_i}{\sum_{j=1}^N
P(I\,|\,C=j)p_j}.
\end{equation}


\subsection{Inspection and Profiling Probabilities}\
Let us substitute the inspection probabilities from Assumption 10
by the profiling probabilities from Assumptions 11 and 12 as in
(\ref{profilingprobabilities}). If we do this in the derived
expressions for the optimal inspection strategies in
(\ref{equality}) for model J, we see that the conditional
inspection probabilities $\pi_i$ have to satisfy
\begin{equation}\label{equationpi}
\pi_i=\frac{\sqrt{p_i}/\lambda_i}{\sum_{j=1}^N \sqrt{p_j}}\,
\sum_{h=1}^N \lambda_h \pi_h\,,\quad i=1,\dots, N.
\end{equation}
Note that these equations determine the $\pi_i$ up to a constant.
Consequently, if $\pi_1,\dots, \pi_N$ are optimal, so are
$c\pi_1,\dots, c\pi_N$ for any $0<c\leq 1/\max_{1 \leq j \leq N}
\pi_j .$ This argument shows that the intensity of actual
inspections may be chosen quite freely without influencing $N_J$
or $\mu_J.$ The same reasoning holds for $N_{MN}$ and $\mu_{MN}.$

Finally, we would like to note that (\ref{profilingprobabilities})
also shows that if for some $i$ the attention probability
$\lambda_i$ is relatively high, then the conditional inspection
probability $\pi_i$ should be chosen relatively small.
Consequently, items that are more likely to come to the attention
of the inspectors, like for instance frequent flyers at airports,
should have a smaller probability to be inspected.

\subsection{Related and future research}\
Besides the research of Press \cite{press2009} the topic of the
present paper has not received much attention yet. In adjacent
fields of research more results are available. Our results however
are more or less complementary to these results. Boland et al.
\cite{Boland2002}, \cite{Boland2003}, \cite{Boland2004} in a
series of papers study stochastic orders of partition and random
testing for faults in software. Some of our models, specifically
the models with enumeration, are a limiting case when a partition
can contain a single item. Montanaro \cite{montanaro2009} shows
that information about items in an unstructured but enumerated
list can speed up the search for a single item in quantum search
relative to quantum search using no additional information.

Further research should include the situation of an unknown,
random number of items with the rare characteristic. These models
are highly relevant when optimizing security screening
applications. The same holds for analyzing the effects of
estimated prior probabilities, or replacing them with conditional
probabilities.


\section{Examples}
To show the relevance and the use of our results we give some
examples and some guidance on how to act in some practical cases.
The goal is to find the item with the $\Gamma$-characteristic as
efficiently as possible. In practice all kinds of situations can
occur and our intention is to choose the best model to handle the
situation. Further applications like fraud detection etc., are left to the imagination of the reader.

\subsection{Example: DNA Screening}\
In a small village a murder has taken place. Due to the isolated
nature of the village and some other indications the police
strongly belief that the murderer is one of the men in the
village. The police suggests requesting for a DNA analysis of all
these men as there was DNA found at the scene of the crime.
However, DNA analyses take time and money, and they compromise the
privacy of the people involved. So, the strategy should be to take
as few DNA samples as possible. What is the optimal available
strategy, assuming that the perpetrator is among the male
inhabitants? The police can assign prior probabilities to the
men that indicate how likely it is that they are the murderer.
Furthermore, they can also enumerate the men and order them
according to the assigned probabilities. So, in this case the best
choice is to go for model ABCD, since a DNA match might be
considered as perfect recognition here.

\subsection{Example: Customs}\
Customs has to check containers transported by sea for illicit materials like drugs and
after 9/11 also for nuclear materials, weapons, explosives, and
biological and chemical weapons. Every once and a while customs
gets credible information that a container at a certain ship
contains one of these illegal materials. Suppose the ship is
carrying $N=5000$ containers, and that for each of them a risk
profile is available so that one can assign prior probabilities
$p_i$ of containing the illicit material. How to check these
containers as efficiently as possible? If one could first
completely unload the ship and set the containers on the dock,
then one could use model ABCD if the recognition of the illicit
material was perfect. In case of imperfect recognition one would
use model E. If one does not have this possibility, but only has
the possibility to decide whether to check or not when the
container leaves the ship, the preferred model under perfect
recognition would be IKL, and model MN otherwise.

\subsection{Example: Entrance}\
The authorities have received information that a certain criminal
is trying to escape from the country with a false identity using a
certain plane. How to proceed most efficiently? That is, not
disrupting the flight schedule and not aggravating innocent
public. In this case if one assumes that recognition is perfect,
one could use model ABCD if one calls the passengers one by one.
Procedures can resume to normal once the criminal has been found.
If one assumes recognition not to be perfect, model EF could be
chosen. Note that everybody has to stay at the gate until the
criminal has been identified, and some individuals will possibly
have to be inspected multiple times. Observe that inspecting
individuals just as they enter, i.e. in random order, will be less
efficient.

\subsection{Example: Robber in Town}\
Suppose a bank is robbed and the robbers get-away car is red.
Which model applies? Model J or model IKL? The difference?
Assuming that every car had an initial probability of being used
for a robbery, we can now update these prior probabilities using
(\ref{BayesRule}) to incorporate the information that the car is
red. When there is perfect recognition one could go for model IKL
but more likely for model J. In the case of stochastic recognition
one resides in model MN. The use of model IKL in practice would
need communication and coordination between different policing
units.



\section{Conclusions}
\noindent In this paper we introduced a framework of models that
can be used to analyze how to find an item of interest in a finite
population when the probability of an item of actually being the
item of interest is assumed known or partially known based on
prior information. The results can be used in several ways. First
they can give investigators and developers of security protocols
ideas on how to design the physical inspections. Secondly they can
give a first handle on weighing the costs of improving prior
information against reduction in costs of more thorough
inspections. Furthermore, the results extend the results of Press
\cite{press2009} in the sense that not only the averages of the
numbers of needed inspections for the different models are ordered
but also these random numbers themselves. Finally, for the
democratic case of model J our results imply that people often
travelling and therefore having a larger probability of coming to
the attention of the inspectors should receive a lower conditional
probability of actually getting an inspection according to the
optimal inspection strategy.


\section{Appendix: Three Lemmata}


\begin{lemma}\label{ABCDsmallerIKL}
$N_{ABCD}$ is stochastically smaller than \ $N_{IKL},$ i.e.
$P(N_{ABCD} > m) \leq P(N_{IKL} > m)$ holds for all positive
integers $m,$ with equality if and only if $p_1 = \dots = p_N=1/N$
holds.
\end{lemma}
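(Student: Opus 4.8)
The plan is to rewrite both tail probabilities explicitly and then compare them. For model ABCD the optimal strategy inspects the items in the order $p_{(1)} \geq \dots \geq p_{(N)}$, so $N_{ABCD}$ is just the rank of the $\Gamma$-item and $P(N_{ABCD} > m) = \sum_{j=m+1}^N p_{(j)} = 1 - \sum_{j=1}^m p_{(j)}$ for $1 \leq m < N$ (and $0$ for $m \geq N$), where $\sum_{j=1}^m p_{(j)}$ is the sum of the $m$ largest prior probabilities. For model IKL, write $A_m = \{I_1, \dots, I_m\}$ for the random set of the first $m$ inspected items, governed by (\ref{IKL1}). Since models I, K, and L all have perfect recognition and never re-inspect an item, $N_{IKL}$ equals the position of the $\Gamma$-item $C$ in the inspection order, so that $N_{IKL} > m$ holds exactly when $C \notin A_m$. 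As $C$ is independent of the inspection order and $P(C=i)=p_i$, I would compute
\begin{equation*}
P(N_{IKL} > m) = \sum_{i=1}^N p_i\, P(i \notin A_m) = 1 - {\mathbb E}\Big[\sum_{i \in A_m} p_i\Big].
\end{equation*}

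With these two expressions, the inequality $P(N_{ABCD} > m) \leq P(N_{IKL} > m)$ reduces to ${\mathbb E}[\sum_{i \in A_m} p_i] \leq \sum_{j=1}^m p_{(j)}$. This is immediate pathwise: every realization of $A_m$ is a set of $m$ indices, and the sum of any $m$ of the $p_i$ is at most the sum $\sum_{j=1}^m p_{(j)}$ of the $m$ largest ones; taking expectations preserves the bound. Crucially, this argument uses nothing about the particular values $q_i$, so the stochastic inequality holds for every feasible inspection probability vector, and in particular for the (possibly unknown) optimal one that defines $N_{IKL}$. This sidesteps the fact, noted after (\ref{muiklProb}), that the optimal $q_i$ cannot be computed in closed form.

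For the equality claim I would look only at $m=1$. Equality there forces ${\mathbb E}[p_{I_1}] = \sum_{i=1}^N q_i p_i = p_{(1)} = \max_i p_i$; since $\sum_{i=1}^N q_i = 1$ and every $q_i$ is strictly positive by Assumption 10, this is possible only if $p_i = \max_j p_j$ for all $i$, i.e. $p_1 = \dots = p_N = 1/N$. Conversely, when $p_i \equiv 1/N$ one has $\sum_{i \in A_m} p_i = m/N = \sum_{j=1}^m p_{(j)}$ for every realization and every $m$, so the two tail functions coincide identically. The main thing to get right is the reduction in the first paragraph, namely identifying $P(N_{IKL}>m)$ with the expected prior mass swept up in the first $m$ inspections and justifying the independence of $C$ from the inspection order; once that is in place, the ordering is a one-line extremal bound and the equality analysis collapses to the $m=1$ case.
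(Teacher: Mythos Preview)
Your proof is correct and takes a genuinely different---and substantially cleaner---route than the paper. The paper computes $P(N_{IKL}>m\mid C=k)$ as an explicit nested sum (their (\ref{sums})--(\ref{simple})), then proves the partial-sum bound $\sum_{k=1}^{\ell}P(N_{IKL}>m\mid C=k)\geq \ell-m$ by direct manipulation of those sums, and finally combines this with $P(N_{ABCD}>m)$ via an Abel summation against the increments $p_{\ell}-p_{\ell+1}$. You bypass all of this by observing that, since the inspection order $(I_1,\dots,I_N)$ is independent of $C$, one has $P(N_{IKL}>m)=1-{\mathbb E}\bigl[\sum_{i\in A_m}p_i\bigr]$, whereupon the trivial pathwise bound $\sum_{i\in A_m}p_i\leq\sum_{j=1}^{m}p_{(j)}$ finishes the job. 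Your argument is also manifestly insensitive to the (unknown) optimal $q_i$, a point you rightly emphasize. For the equality characterization the paper reads it off from where the chain (\ref{sums2})--(\ref{lowerbound}) can be tight, while your reduction to $m=1$ and the observation that a strictly positive convex combination $\sum_i q_i p_i$ equals $\max_i p_i$ only when all $p_i$ coincide is both shorter and more transparent; the converse direction is identical in spirit. In short, the paper's approach is computational and yields the intermediate partial-sum inequality as a byproduct, whereas yours is conceptual and gets to the conclusion in essentially one line.
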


\medskip\noindent
\begin{proof}
First consider, with $C$ the index of the $\Gamma$-item,

\begin{equation}\label{sums}
\lefteqn{}P(N_{IKL} > m \,|\, C=k)=  \sum_{\substack{i_1=1\\i_1\neq k}}^N f_{i_1}
        \sum_{\substack{i_2=1\\i_2\neq k\\i_2\neq i_1}}^Nf_{i_2}
        \sum_{\substack{i_3=1\\i_3\neq k\\i_3\neq i_1\\i_3\neq i_2}}^Nf_{i_3}
         \cdots \sum_{\substack{i_m=1\\ i_m\neq k\\ i_m\neq i_1\\ i_m\neq i_2
        \\ \cdots\\ i_m\neq
        i_{m-1}}}^Nf_{i_m},
\end{equation}
with
\begin{equation}\label{factor}
f_{i_j}=\frac{q_{i_j}}{1-q_{i_1}-\cdots - q_{i_{j-1}}}
\end{equation}
for $j=2,\dots,N$ and $f_{i_1}=q_{i_1}$.

Since the $q_i$'s add up to 1, the last sum in (\ref{sums}) equals
\begin{equation}\label{simple}
1-\frac{q_k}{1-q_{i_1}-q_{i_2}-\cdots q_{i_{m-1}}}.
\end{equation}
Addition of (\ref{sums}) over $k$ from 1 to $\ell$ taking into
account (\ref{simple}) yields
\begin{eqnarray}\label{sums2}
\lefteqn{\sum_{k=1}^\ell P( N_{IKL} > m \,|\, C=k) \nonumber} \\
& & = \sum_{i_1=1}^N
        \sum_{\substack{i_2=1\\i_2\neq i_1}}^N
        \sum_{\substack{i_3=1\\i_3\neq i_1\\i_3\neq i_2}}^N
        \cdots
        \sum_{\substack{i_{m-1}=1\\i_{m-1}\neq i_1\\i_{m-1}\neq i_2\\\cdots\\i_{m-1}\neq i_{m-2}}}^N
        f_{i_1}f_{i_2}f_{i_3} \cdots f_{i_{m-1}} \nonumber \\
& & \qquad \qquad \qquad \qquad \qquad \qquad\sum_{\substack{k=1\\k\neq i_j\\
j=1,\dots,m-1}}^\ell \left( 1-\frac{q_k}{1-q_{i_1}-q_{i_2}-\cdots
      q_{i_{m-1}}}\right),
\end{eqnarray}
where we have interchanged the summation over $k$ with the
summations over the $i_j$'s. Note that the last sum in
(\ref{sums2}) equals at least
\begin{equation}\label{bound}
\sum_{\substack{k=1\\k\neq i_j\\
j=1, \dots,m-1}}^\ell 1  -  \sum_{\substack{k=1\\k\neq i_j\\
j=1,\dots,m-1}}^N \frac{q_k}{1-q_{i_1}-q_{i_2}-\cdots
      q_{i_{m-1}}}  \geq \ell -(m-1) - 1= \ell -m.
\end{equation}
Combining (\ref{sums2}) and (\ref{bound}) we obtain
\begin{eqnarray}\label{sums3}
\lefteqn{\sum_{k=1}^\ell P( N_{IKL} > m \,|\, C=k) \nonumber} \\
& &    \geq \sum_{i_1=1}^N
        \sum_{\substack{i_2=1\\i_2\neq i_1}}^N
        \sum_{\substack{i_3=1\\i_3\neq i_1\\i_3\neq i_2}}^N
        \cdots
        \sum_{\substack{i_{m-1}=1\\i_{m-1}\neq i_1\\i_{m-1}\neq i_2\\\cdots\\i_{m-1}\neq i_{m-2}}}^N
        f_{i_1}f_{i_2}f_{i_3}
        \cdots f_{i_{m-1}}\,(\ell -m) \nonumber \\
& &       =  \ell - m.
\end{eqnarray}
Without loss of generality we may assume that the items in the
population have been numbered such that
\begin{equation}\label{ordered}
p_1 \geq p_2 \geq \dots \geq p_N\,.
\end{equation}
Then, we have
\begin{equation}\label{authoritarian}
P(N_{ABCD}> m)= \sum_{k=m+1}^N P(C=k) = \sum_{k=m+1}^N p_k\,.
\end{equation}
Together with (\ref{sums3}) and (\ref{ordered}) this equality
yields, with $p_{N+1}=0,$
\begin{eqnarray}\label{lowerbound}
\lefteqn{P(N_{IKL} > m) - P(N_{ABCD} > m) =
\sum_{k=1}^N \left\{P(N_{IKL}>m\,|\,C=k)-{\bf 1}_{[k>m]}\right\}p_k \nonumber }\\
& & \qquad \qquad \qquad =\sum_{k=1}^N \sum_{\ell=k}^N \left\{P(N_{IKL}>m\,|\,C=k)-{\bf 1}_{[k>m]}\right\} \left(p_\ell-p_{\ell+1}\right) \nonumber \\
& & \qquad \qquad \qquad =\sum_{\ell=1}^N \sum_{k=1}^\ell
\left(p_\ell-p_{\ell+1}\right)\left\{P(N_{IKL}>m\,|\,C=k)-{\bf
1}_{[k>m]}\right\} \nonumber \\
& & \qquad \qquad \qquad \geq  \sum_{\ell=1}^N
\left(p_\ell-p_{\ell+1}\right)\left\{[\ell - m]^+ -
\sum_{k=1}^\ell {\bf 1}_{[k>m]}\right\} = 0,
\end{eqnarray}
where $[x]^+$ equals the maximum of $x$ and 0.
Inequality (\ref{lowerbound}) proves the stochastic ordering. (\ref{lowerbound}) and
(\ref{sums2})--(\ref{sums3}) show that $N_{ABCD}$ and $N_{IKL}$ have
the same distribution if and only if $p_1 = \dots = p_N$ holds.
\end{proof}



\begin{lemma}\label{JsmallerMN}
$N_J$ is stochastically smaller than $N_{MN},$ i.e. $P(N_J \leq m)
\geq P(N_{MN} \leq m)$ holds for all positive integers $m,$ with
equality if and only if $\sum_{i=1}^N s_i p_i =1$ holds.
\end{lemma}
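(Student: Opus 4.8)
The plan is to exploit the fact that, under both optimal strategies, the number of inspections is a mixture of geometric distributions indexed by the location $C$ of the $\Gamma$-item. First I would condition on $C=i$. Under model J the optimal strategy uses $q_i=\sqrt{p_i}/\sum_{j}\sqrt{p_j}$ and each inspection of the $\Gamma$-item succeeds with probability $q_i$, so $N_J\mid\{C=i\}$ is geometric with success probability $q_i$, exactly as in (\ref{geometric}). Under model MN the optimal strategy uses $q_i=\sqrt{p_i/s_i}/\sum_{j}\sqrt{p_j/s_j}$, but recognition occurs only with probability $s_i$, so the effective per-inspection success probability is $r_i:=q_i s_i=\sqrt{p_i s_i}/\sum_{j}\sqrt{p_j/s_j}$, and $N_{MN}\mid\{C=i\}$ is geometric with success probability $r_i$. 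Both mixtures are proper distributions since $r_i>0$.

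Next I would write the survival functions. Using $P(\mathrm{geom}(\theta)>m)=(1-\theta)^m$ and averaging over $C$,
\[
P(N_J>m)=\sum_{i=1}^N p_i(1-q_i)^m,\qquad P(N_{MN}>m)=\sum_{i=1}^N p_i(1-r_i)^m .
\]
It therefore suffices to prove the pointwise comparison $r_i\le q_i$ for every $i$; then $(1-r_i)^m\ge(1-q_i)^m$ term by term, so $P(N_{MN}>m)\ge P(N_J>m)$ for every positive integer $m$, which is precisely $N_J\preceq_{st}N_{MN}$.

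The crux is the comparison $r_i\le q_i$, and here the assumption $0<s_i\le 1$ enters twice in the same favorable direction. Writing the ratio
\[
\frac{r_i}{q_i}=\sqrt{s_i}\,\frac{\sum_{j=1}^N\sqrt{p_j}}{\sum_{j=1}^N\sqrt{p_j/s_j}},
\]
I would observe that $\sqrt{s_i}\le 1$ and that $\sqrt{p_j/s_j}\ge\sqrt{p_j}$ makes the remaining fraction at most $1$; hence $r_i\le q_i$. This is the only place an inequality is invoked, so I expect it to be the main (though short) obstacle: one must remember to substitute the \emph{two different} optimal profiles for J and for MN before comparing, rather than comparing the models under a common $q_i$.

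Finally, for the equality condition I would argue that equality of the two survival functions for all $m$ is equivalent to equality at $m=1$, because each summand of $P(N_{MN}>m)$ dominates the corresponding summand of $P(N_J>m)$, so equal sums force termwise equality. Since all $p_i>0$, termwise equality at $m=1$ gives $r_i=q_i$ for every $i$. From the displayed ratio, $r_i=q_i$ forces both $\sqrt{s_i}=1$ and $\sum_{j}\sqrt{p_j/s_j}=\sum_{j}\sqrt{p_j}$, i.e. $s_i=1$ for all $i$; as $\sum_i p_i=1$ and $s_i\le 1$, this is equivalent to $\sum_{i=1}^N s_i p_i=1$, completing the equality analysis.
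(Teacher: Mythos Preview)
Your proof is correct and follows the same skeleton as the paper's: write both $N_J$ and $N_{MN}$ as mixtures of geometric distributions indexed by $C$, compute the survival functions, and compare termwise.

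The notable difference is that you are more careful than the paper about the inspection probabilities. The paper writes
\[
P(N_J\le m)=1-\sum_{k}(1-q_k)^m p_k,\qquad P(N_{MN}\le m)=1-\sum_{k}(1-s_kq_k)^m p_k,
\]
using the \emph{same} symbol $q_k$ in both lines, and then concludes the inequality ``in view of $0<s_k\le 1$''. Taken literally this compares the two models under a common sampling profile, not under their respective optimal profiles, and the stated justification does not by itself cover the case of two different $q$'s. You correctly substitute the optimal $q_i^J=\sqrt{p_i}/\sum_j\sqrt{p_j}$ and $q_i^{MN}=\sqrt{p_i/s_i}/\sum_j\sqrt{p_j/s_j}$ and prove the needed pointwise bound $r_i=s_iq_i^{MN}\le q_i^J$ via
\[
\frac{r_i}{q_i^J}=\sqrt{s_i}\cdot\frac{\sum_j\sqrt{p_j}}{\sum_j\sqrt{p_j/s_j}}\le 1,
\]
using $s_i\le 1$ twice. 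Your explicit warning that ``one must remember to substitute the two different optimal profiles before comparing'' is well placed; this is precisely the step the paper's argument glosses over. Your equality analysis is also fine: since both factors above are at most $1$, their product equals $1$ only if each equals $1$, which forces $s_i=1$ for all $i$, i.e.\ $\sum_i s_ip_i=1$.
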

\begin{proof}
We have
\begin{eqnarray}\label{BasicDemocraticForm}
\lefteqn{P(N_J \leq m)= \sum_{k=1}^N
P(N_J \leq m\,|\,C=k) P(C=k) \nonumber} \\
& & \qquad =\sum_{k=1}^N \sum_{\ell=1}^m (1-q_{k})^{\ell-1}q_k p_k
= \sum_{k=1}^N \frac{1-(1-q_k)^m}{1-(1-q_k)}\, q_k p_k \\
& & \qquad = \sum_{k=1}^N [1-(1-q_k)^m]p_k = 1-\sum_{k=1}^N
(1-q_k)^m p_k\,. \nonumber
\end{eqnarray}
Similarly, for $N_{MN}$ we have
\begin{equation}\label{dfNMN}
P(N_{MN}\leq m)=1-\sum_{k=1}^N (1-s_kq_k)^m p_k\,.
\end{equation}
This leads to
\begin{equation}
P(N_J \leq m)- P(N_{MN} \leq m) = \sum_{k=1}^N \left[ (1-s_k
q_k)^m - (1- q_k)^m \right]\, p_k \geq 0 \nonumber
\end{equation}
in view of $0 < s_k \leq 1,$ which proves the lemma.
\end{proof}



\begin{lemma}\label{EFincomparable}
$N_{EF}$ is stochastically not comparable to $N_{OP}$ nor to
$N_{IKL}$ and $N_J.$
\end{lemma}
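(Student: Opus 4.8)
The plan is to establish non-comparability for each of the three target variables by exhibiting, for a suitable choice of the parameters $N$, $(p_i)$ and $(s_i)$, two thresholds at which the inequality between the distribution functions reverses. Concretely, by Definition \ref{stoch} it suffices, for each $Y\in\{N_{IKL},N_J,N_{OP}\}$, to produce integers $m_1,m_2$ with $P(N_{EF}\le m_1)>P(Y\le m_1)$ and $P(N_{EF}\le m_2)<P(Y\le m_2)$; the first inequality rules out $Y\preceq_{st}N_{EF}$ and the second rules out $N_{EF}\preceq_{st}Y$. Throughout I will use that, by (\ref{probofrec_ij}) and the greedy ordering (\ref{argmax}), $P(N_{EF}\le m)$ equals the sum of the $m$ largest numbers among $\{p_i(1-s_i)^{j-1}s_i\}$, so that the first inspection under EF always hits the item maximising $p_is_i$.

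For the pair $N_{EF},N_J$ I would give a single explicit example with $N=2$. Take $p=(0.9,0.1)$, so that the optimal democratic probabilities from (\ref{equality}) are $q=(0.75,0.25)$, and take constant recognition $s_1=s_2=0.8$. A direct evaluation using (\ref{DM})--(\ref{BasicDemocraticForm}) gives $P(N_J\le1)=0.70$ and $P(N_J\le2)=0.8875$, while the EF schedule begins $1,1,\dots$ and yields $P(N_{EF}\le1)=0.72$ and $P(N_{EF}\le2)=0.864$. Thus $N_{EF}$ leads at $m=1$ (its deterministic optimal first pick beats the randomised democratic draw) but is overtaken at $m=2$ (the perfect recognition of model J wastes no inspection on a re-check), which is exactly the required crossing.

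For the pair $N_{EF},N_{IKL}$ I would instead argue by a perturbation off perfect recognition, which avoids the intractable optimal $q_i$ of (\ref{muiklProb}). Fix any non-uniform prior and set $s_i\equiv1-\varepsilon$. By Lemma \ref{ABCDsmallerIKL} the laws of $N_{ABCD}$ and $N_{IKL}$ differ, so together with $N_{ABCD}\preceq_{st}N_{IKL}$ there is an $m^{\ast}$ with $P(N_{ABCD}\le m^{\ast})>P(N_{IKL}\le m^{\ast})$. For small $\varepsilon$ the $m^{\ast}$ largest EF-values are exactly the first inspections of the top $m^{\ast}$ items, so $P(N_{EF}\le m^{\ast})=(1-\varepsilon)P(N_{ABCD}\le m^{\ast})$, which for $\varepsilon$ small enough still exceeds $P(N_{IKL}\le m^{\ast})$ and rules out $N_{IKL}\preceq_{st}N_{EF}$. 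Since $s_i<1$, the variable $N_{EF}$ has unbounded support while $N_{IKL}\le N$ almost surely; taking $m_2=N$ gives $P(N_{EF}\le N)=1-\varepsilon<1=P(N_{IKL}\le N)$ and rules out $N_{EF}\preceq_{st}N_{IKL}$.

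The remaining pair $N_{EF},N_{OP}$ is where I expect the real difficulty. One direction is immediate from defectiveness: since the $\Gamma$-item is found under OP with probability only $\sum_i s_ip_i<1$, while $P(N_{EF}\le m)\to1$, there is a large $m_1$ with $P(N_{EF}\le m_1)>\sum_i s_ip_i\ge P(N_{OP}\le m_1)$, excluding $N_{OP}\preceq_{st}N_{EF}$. The hard part is the reverse inequality $P(N_{EF}\le m_2)<P(N_{OP}\le m_2)$: the constant-recognition perturbation used for $N_{IKL}$ degenerates here, because with $s_i\equiv s$ one gets $P(N_{OP}\le m)=s\,P(N_{IKL}\le m)$ and the thinning exactly cancels the head advantage of EF at $m=N$, leaving the two distribution functions tied rather than crossed. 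My plan is therefore to engineer a genuinely asymmetric example with item-dependent $s_i$ --- a high-prior, low-recognition item that the greedy EF order re-inspects early, so that some other item is still uninspected by a chosen horizon, against which the democratic order of OP (which visits distinct items) can recover a strictly larger detection probability. Verifying that such a configuration indeed produces $P(N_{OP}\le m_2)>P(N_{EF}\le m_2)$, while respecting that the OP inspection probabilities are the optimal democratic ones of (\ref{muiklProb}) for which no closed form is available when $N\ge3$, is the step I expect to be the main obstacle, and the one I would spend most effort on.
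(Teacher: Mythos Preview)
Your treatments of $N_{EF}$ versus $N_J$ and versus $N_{IKL}$ are correct, though the paper takes a shorter route that handles both at once: instead of exhibiting a distribution crossing within a single example, it produces two parameter settings in which the \emph{means} are oppositely ordered. With $s_i\equiv 1$, $N>1$ and non-uniform $p_i$ one gets $\mu_{EF}=\mu_{ABCD}<\mu_{IKL}<\mu_J$ directly from Theorem~\ref{OverallOrderNumber}; with $N=1$ and $s_1<1$ one gets $\mu_{EF}=1/s_1>1=\mu_{IKL}=\mu_J$. Since $X\preceq_{st}Y$ forces $EX\le EY$, this settles both pairs simultaneously without computing any distribution functions. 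Your approach buys concreteness (one can see the crossing), the paper's buys economy.

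For $N_{EF}$ versus $N_{OP}$ you have correctly located the obstacle but not overcome it; the proposal is genuinely incomplete here. The paper's device is to force $p_is_i$ to be \emph{constant} in $i$ by taking $p_i=2i/(N(N+1))$ and $s_i=1/i$, so that the greedy first EF pick has no edge: $P(N_{EF}=1)=\max_j p_js_j=2/(N(N+1))$. On the other side, rather than wrestle with the intractable IKL optimum you worried about, the paper bounds $P(N_{OP}=1)=P(B=1)P(N_{IKL}=1)$ from below via $P(N_{IKL}=1)\ge P(N_J=1)$, for which the optimal $q_i=\sqrt{p_i}/\sum_j\sqrt{p_j}$ \emph{are} explicit. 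The comparison $P(N_{OP}=1)>P(N_{EF}=1)$ then reduces to $\sum_i i\sqrt{i}\big/\sum_i\sqrt{i}>(N+1)/2$, i.e.\ Chebyshev's algebraic inequality, and together with the defectiveness direction (your easy half) this closes the case at the single threshold $m=1$. The two ideas you are missing are: (i) flatten $p_is_i$ so that the first EF inspection is as weak as possible, and (ii) sidestep the unknown IKL optimum by passing through the known $J$ optimum using $N_{IKL}\preceq_{st}N_J$.
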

\begin{proof}
If both $\sum_{i-1}^N s_i p_i=1$ and $N>1$ hold and not all $p_i$
are equal to $1/N,$ then Theorem \ref{OverallOrderNumber} and
Corollary \ref{OverallOrderAverage} imply
\begin{equation}\label{less}
\mu_{EF}=\mu_{ABCD} < \mu_{IKL} < \mu_J.
\end{equation}
However, for $N=1$ and $s_1<1$ we have
\begin{equation}\label{more}
\mu_{EF}= \frac 1{s_1} > 1 = \mu_{IKL}= \mu_J.
\end{equation}
Inequalities (\ref{less}) and (\ref{more}) show that $N_{EF}$
cannot be stochastically ordered with respect to $N_{IKL}$ and
$N_J$ without additional conditions.

Comparing $N_{EF}$ to $N_{OP}$ we choose
\begin{equation}\label{choices}
p_i = \frac{2i}{N(N+1)},\quad s_i = \frac 1i, \quad i=1, \dots, N.
\end{equation}
Now, on the one hand
\begin{equation}\label{infinity}
P\left( N_{EF} < \infty \right) =1, \quad P\left(N_{OP} = \infty
\right) = 1-\frac 2{N+1}
\end{equation}
holds and on the other hand (\ref{argmax}) implies
\begin{equation}\label{EF1}
P\left( N_{EF}=1 \right) = \max_{1 \leq j \leq N} s_j p_j = \frac
2{N(N+1)}
\end{equation}
and the second inequality of (\ref{OONvlg3}),
(\ref{BasicDemocraticForm}), and (\ref{equality}) yield
\begin{eqnarray}\label{OP1}
\lefteqn{P\left( N_{OP} = 1 \right) = P\left( B N_{IKL} +
(1-B)\infty = 1 \right)\nonumber} \\
& & \geq P(B=1) P\left( N_J = 1 \right) = P(B=1) \sum_{i=1}^N q_i
p_i \nonumber \\
& & = \sum_{i=1}^N s_i p_i \frac{\sum_{i=1}^N p_i
\sqrt{p_i}}{\sum_{i=1}^N \sqrt{p_i}}= \frac 4{N(N+1)^2}
\frac{\sum_{i=1}^N i \sqrt{i}}{\sum_{i=1}^N \sqrt{i}}.
\end{eqnarray}
The right hand side of (\ref{OP1}) is larger than the right hand
side of (\ref{EF1}) if and only if
\begin{equation}\label{averages1}
\frac{\sum_{i=1}^N i \sqrt{i}}{\sum_{i=1}^N \sqrt{i}} >
\frac{N+1}2
\end{equation}
if and only if
\begin{equation}\label{averages2}
\sum_{i=1}^N \left(i -\frac{N+1}2 \right) \left( \sqrt{i}
-\sqrt{\frac{N+1}2}\right) > 0,
\end{equation}
which holds for $N \geq 2;$ Chebyshev's algebraic inequality. This
shows
\begin{equation}
P\left( N_{OP} = 1 \right) > P\left( N_{EF} = 1 \right), \quad N
\geq 2,
\end{equation}
which together with (\ref{infinity}) shows that $N_{EF}$ is
stochastically neither larger nor smaller than $N_{OP}.$
\end{proof}



\section*{Acknowledgements}
 We thank Cor Veenman and Gert Jacobusse for helpful discussions.

\end{document}